\numberwithin{equation}{section}\newtheorem{theorem}{Theorem}[section]
\newtheorem{proposition}[theorem]{Proposition}\theoremstyle{remark}
\newtheorem{remark}{Remark}[section]
\theoremstyle{definition}
\newtheorem{definition}[theorem]{Definition}
\title[$n$-dimensional Dirac equation]
{Virial identity and dispersive estimates\\
for the $n$-dimensional Dirac equation}
\date{\today}    
\author{Federico Cacciafesta}
\address{Federico Cacciafesta: 
SAPIENZA --- Universit\`a di Roma,
Dipartimento di Matematica, 
Piazzale A.~Moro 2, I-00185 Roma, Italy}
\email{cacciafe@mat.uniroma1.it}
\subjclass[2000]{
35J10, 
35Qxx, 
42B20, 
42B35 
}
\keywords{%
singular integrals,
weighted spaces,
Schr\"odinger operator,
Schr\"odinger equation,
Strichartz estimates,
smoothing estimates
}
\begin{document}\maketitle
\begin{abstract}
  We extend to general dimension $n\ge2$ the virial identity
  proved in \cite{BoussaidDAnconaFanelli11-a} for the 3D
  magnetic Dirac equation. As an application we deduce smoothing and Strichartz
  estimates for an $n$-dimensional Dirac equation perturbed with
  a magnetic potential. To prove the smoothing estimates we use the multiplicator 
  technique already developed in \cite{BoussaidDAnconaFanelli11-a} and in \cite{FanelliVega09-a}.
\end{abstract}

\section{Introduction}\label{sec:introduction}  


The \emph{Dirac equation on $\mathbb{R}^{1+n}$} is a
constant coefficient, hyperbolic system of the form
\begin{equation}\label{eq1}
iu_t+\mathcal{D}u+m\beta u=0\\
\end{equation}
where $u:\mathbb{R}_t\times\mathbb{R}_x^n\rightarrow\mathbb{C}^M$,
the \emph{Dirac operator} is defined by
\begin{equation*}
  \mathcal{D}=
  i^{-1}\displaystyle\sum_{k=1}^n\alpha_k
  \frac{\partial}{\partial x_{k}}=
  i^{-1}(\alpha\cdot\nabla),
\end{equation*}
and the \emph{Dirac matrices}
$\alpha_{0}\equiv\beta$, $\alpha_{1},\dots,\alpha_{n}$
are a set of $M \times M$ hermitian matrices satisfying
the anti-commutation relations
\begin{equation}\label{matrices}
  \alpha_j\alpha_k+\alpha_k\alpha_j=2\delta_{jk}\mathbb{I}_M,\qquad
  0\le j,k\le n.
\end{equation}
The quantity $m\ge0$ is called the \emph{mass} and in
the classical 3D model is linked with the mass of a spin 1/2 particle.

\begin{remark}\label{rem:ndimdirac}
  For each dimension $n\ge1$ there exist different choices of $M$ and of
  matrices $\alpha_{j}$
  satisfying all of the above conditions; the original Dirac 
  equation corresponds to $n=3$, $M=4$, in which case the 4 matrices can
  be chosen from a well known set of 16 anticommuting matrices
  (see \cite{Thaller92-a}). A possible way to construct a familiy of matrices satisfying such properties is
  the following. \\
  For $n=1$ let
  $$
  \alpha_0^1=\left(\begin{array}{cc}0 & 1 \\1 & 0\end{array}\right),\qquad
  \alpha_1^1=\left(\begin{array}{cc}1 & 0 \\0 & -1\end{array}\right).
  $$
  For $n\geq2$ let
  $$
  \alpha_j^{(n)}=\left(\begin{array}{cc}0 & \alpha_j^{(n-1)} \\\alpha_j^{(n-1)} & 0\end{array}\right),\quad 
  j=0,...,n-1,\qquad
  \alpha^{(n)}_n=\left(\begin{array}{cc}I_n & 0 \\0 & -I_n\end{array}\right).
  $$
  Notice that in this case $M=2^n$ (for a more
  detailed analysis of general Dirac matrices, see
  \cite{MachiharaNakamuraOzawa04-a},
  \cite{KalfYamada01-a},
  \cite{OzawaYamauchi04-a})
\end{remark}

An easy consequence of the anticommutation relations is the identity
\begin{equation}\label{sq2}
  (i\partial_t-\mathcal{D}-m\beta)(i\partial_t+\mathcal{D}+m\beta)=
  (\Delta-m^2-\partial_{tt}^2)\mathbb{I}_M.
\end{equation}
which reduces the study of \eqref{eq1} to a corresponding
study of the Klein-Gordon equation, or the wave equation
in the massless case $m=0$. The analysis of the important
Maxwell-Dirac and Dirac-Klein-Gordon systems of 
quantum electrodynamics in
\cite{Bournaveas96-a}-
\cite{Bournaveas99-a}
was based on this method;
notice however that in the reduction
step some essential details of the structure may be lost,
as recently pointed out in
\cite{DanconaFoschiSelberg07-a},
\cite{DanconaFoschiSelberg07-b},
\cite{DanconaFoschiSelberg10-a}.

From \eqref{sq2} one can deduce in a straightforward way the
dispersive properties of the Dirac flow from the corresponding
properties of the wave-Klein-Gordon flow.
Based on this approach, an extensive theory of local and
global well posedness for nonlinear perturbations of
\eqref{eq1} was developed in
\cite{DiasFigueira87-a},
\cite{EscobedoVega97-a},
\cite{MachiharaNakamuraOzawa04-a},
\cite{MachiharaNakamuraNakanishi05-a};
see also 
\cite{DanconaFanelli07-a},
\cite{DanconaFanelli08-a} for a study of
the dispersive properties 
of the Dirac equation perturbed by a magnetic field.

The goal of this paper is to study the dispersive
properties of the system \eqref{eq1} perturbed by
a magnetic field, thus
extending to the $n$-dimensional setting the smoothing
and Strichartz estimates proved in 
\cite{BoussaidDAnconaFanelli11-a}
for the 3D magnetic Dirac equation. Denoting with
\begin{equation*}
  A(x)=(A^1(x),...,A^n(x)):\mathbb{R}^n\rightarrow\mathbb{R}^n
\end{equation*}
a static magnetic potential, the standard way to express its interaction
with a particle is by replacing the derivatives $\partial_{k}$
with their covariant counterpart $\partial_{k}-iA^{k}$,
thus obtaining the \emph{magnetic Dirac operator}
\begin{equation}\label{D}
  \mathcal{D}_A=
  i^{-1}\displaystyle\sum_{k=1}^n\alpha_k(\partial_k-iA^k)=
  i^{-1}\alpha \cdot \nabla_{A},\qquad
  \nabla_{A}=\nabla-iA(x).
\end{equation}
Here and in the following we denote with a dot
the scalar product of two vectors of operators:
\begin{equation*}
  (P_{1},\dots,P_{m})\cdot (Q_{1},\dots,Q_{m})=
  \sum_{j=1}^{m}P_{j}Q_{j}.
\end{equation*}
We shall also use the unified notation
\begin{equation}\label{H}
  \displaystyle\mathcal{H}=
  i^{-1}\alpha\cdot\nabla_A+m\beta=\mathcal{D}_A+m\beta
\end{equation}
to include both the massive and the massless case.

Thus we plan to investigate the dispersive properties of the flow $e^{it\mathcal{H}}f$ defined as the solution to the Cauchy problem
\begin{equation}\label{Hprob}
  iu_t(t,x)+\mathcal{H}u(t,x)=0,
  \qquad
  u(0,x)=f(x).
\end{equation}
It is natural to require that the operator $\mathcal{H}$ be selfadjoint. 
Several sufficient conditions are known for selfadjointness
(see \cite{Thaller92-a}). For greatest generality, we prefer to
make an abstract selfadjointness assumtpion; we also includes a
density condition which allows to approximate rough solutions with smoother 
ones, locally uniformly in time, and is easily verified in concrete cases.
The condition is the following:

\begin{quotation}
  SELF-ADJOINTNESS ASSUMPTION (A). The operator $\mathcal{H}$ is 
  essentially selfadjoint on $C^\infty_c(\mathbb{R}^n)$, and in 
  addition for initial data $f\in C^\infty_c(\mathbb{R}^n)$ the 
  flow $e^{it\mathcal{H}}f$ belongs at least
  to $C(\mathbb{R}, H^{3/2})$.
\end{quotation}

The main tool used here is the method of Morawetz multipliers, in the
version of \cite{DanconaFanelliVega10-a}, \cite{BoussaidDAnconaFanelli11-a}.
This method allows to partially overcome the smallness assumption on the
potential which was necessary for the perturbative approach of
\cite{DanconaFanelli08-a}. An additional advantage is that the
assumptions on the potential are expressed in terms of the
\emph{magnetic field} $B$ rather than the vector potential $A$;
indeed, $B$ is a physically measurable quantity while $A$
should be thought of as a mathematical abstraction.
We recall that in dimension 3 the magnetic field $B$ is defined as
\begin{equation*}
  B=\rm{curl} A.
\end{equation*}
In arbitrary dimension $n$, a natural generalization of the previous
definition is the following

\begin{definition}\label{B}
  Given a magnetic potential $A:\mathbb{R}^{n}\to \mathbb{R}^{n}$,
  the \emph{magnetic field} 
  $B:\mathbb{R}^n\rightarrow \mathcal{M}_{n\times n}(\mathbb{R})$ 
  is the matrix valued function
  \begin{equation*}
    B=DA-DA^t,\qquad 
    B^{jk}=\displaystyle\frac{\partial A^j}
    {\partial x^k}-\frac{\partial A^k}{\partial x^j}
  \end{equation*}
  and its \emph{tangential component}
  $B_\tau=\mathbb{R}^n\rightarrow\mathbb{R}^n$ is defined as
  \begin{equation*}
    B_\tau=\displaystyle\frac{x}{|x|}B.
  \end{equation*}
  Notice indeed that $B_{\tau}(x)$ is orthogonal to $x$
  for all $x$.
\end{definition}

\begin{remark}
  The previous definition reduces to the standard one in
  dimension $n=3$: indeed the matrix $B$ satisfies
  for all $v\in \mathbb{R}^{3}$
  \begin{equation*}
    Bv=\rm{curl}A\wedge v
  \end{equation*}
  and in this sense $B$ can be identified with $\mathrm{curl}A$. 
  Notice also that
  \begin{equation*}
    B_\tau=\displaystyle\frac{x}{|x|}\wedge\mathrm{curl} A.
  \end{equation*}
\end{remark}

Our first result is the following (formal) virial identity for the 
$n$-dimensional magnetic Dirac equation \eqref{Hprob}:

\begin{theorem}[Virial identity]\label{virialdir}
  Assume that the operator $\mathcal{H}$ defined
  in \eqref{H} satisfies (A), and let 
  $\phi:\mathbb{R}^n\rightarrow \mathbb{R}$ be a real valued function.
  Then any solution $u(t,x)$ of \eqref{Hprob}
  satisfies the formal virial identity 
  \begin{equation}\label{vir}  
  \begin{split}
     & 
     2\int_{\mathbb{R}^n}
       \nabla_Au \cdot D^2\phi \cdot\overline{\nabla_Au}-\frac{1}{2}
     \int_{\mathbb{R}^n}|u|^2\Delta^2\phi
     +2\int_{\mathbb{R}^n}
     \Im\left(
        u \nabla\phi \cdot B\cdot\overline{\nabla_Au}
     \right)
     +
  \\
     & 
     +\int_{\mathbb{R}^n}
       \overline{u}\cdot
       \sum_{j<k}\alpha_j\alpha_k(\nabla\phi
          \cdot\nabla B^{jk})u
          =
     -\frac{d}{dt}
     \int_{\mathbb{R}^n}
     \Re
     \left(u_t(2\nabla\phi\cdot\overline{\nabla_Au}
           +\overline{u}\Delta\phi)\right).
  \end{split}
  \end{equation}
\end{theorem}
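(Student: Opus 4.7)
The plan is to recast \eqref{Hprob} as a second-order Klein--Gordon-type equation and then apply a magnetic Morawetz multiplier in the spirit of \cite{FanelliVega09-a,BoussaidDAnconaFanelli11-a}. Since $u_t = i\mathcal{H}u$ gives $u_{tt} = -\mathcal{H}^2 u$, the first task is to square $\mathcal{H}$. Using the anticommutation relations \eqref{matrices} (in particular $\alpha_k\beta+\beta\alpha_k=0$ for $k\ge 1$ and $\beta^2=\mathbb{I}$) together with the commutator identity $[\nabla_A^j,\nabla_A^k]=-iB^{jk}$, one finds
\[
\mathcal{H}^2 \;=\; -\Delta_A \,+\, m^2\,\mathbb{I} \,+\, i\sum_{j<k}\alpha_j\alpha_k\, B^{jk}.
\]

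Next, introduce the first-order multiplier $Mu := 2\nabla\phi\cdot\nabla_A u + (\Delta\phi)u$. Since $\phi$ is real and each $\nabla_A^j$ is formally skew-adjoint (as $A$ is real), $M$ is itself formally skew-adjoint on $L^2(\mathbb{R}^n;\mathbb{C}^M)$. The RHS of \eqref{vir} is precisely $-\frac{d}{dt}\Re\int u_t\,\overline{Mu}$. Differentiating in time, using $u_{tt}=-\mathcal{H}^2 u$, and observing that the cross term $\Re\int u_t\,\overline{Mu_t}=\Re\int \mathcal{H}u\,\overline{M\mathcal{H}u}$ vanishes by skew-adjointness of $M$, I reduce the identity to proving
\[
\Re\!\int\mathcal{H}^2 u\,\overline{Mu} \;=\; \text{LHS of \eqref{vir}}.
\]

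I would then evaluate this expression by treating the three summands of $\mathcal{H}^2$ separately. The mass piece $m^2\Re\int u\,\overline{Mu}$ vanishes, again by skew-adjointness of $M$ (this is why no mass term appears in \eqref{vir}). The magnetic Laplacian piece $\Re\int(-\Delta_A u)\overline{Mu}$ reproduces the (by now classical) magnetic Morawetz identity from \cite{FanelliVega09-a}: two integrations by parts yield the Hessian form $2\int\nabla_A u\cdot D^2\phi\cdot\overline{\nabla_A u}$; transferring further derivatives onto $\Delta\phi$ produces the biharmonic density $-\frac12\int|u|^2\Delta^2\phi$; and swapping the order of two covariant derivatives via $[\nabla_A^j,\nabla_A^k]=-iB^{jk}$ introduces the magnetic cross term $2\int\Im(u\,\nabla\phi\cdot B\cdot\overline{\nabla_A u})$. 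This step is essentially a direct rerun of the scalar computation.

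Finally, the genuinely Dirac-specific piece $\Re\int\bigl(i\sum_{j<k}\alpha_j\alpha_k B^{jk}u\bigr)\overline{Mu}$ is handled by integrating by parts the covariant derivative $\nabla_A^\ell$ appearing in $M$, via the identity $\int v\,\overline{\nabla_A^\ell u}=-\int(\nabla_A^\ell v)\overline{u}$; this transfers one derivative onto $B^{jk}$ and produces the factor $\nabla\phi\cdot\nabla B^{jk}$, while the remaining terms (those involving $\Delta\phi$ and $\alpha_j\alpha_k\nabla_A u$) combine and cancel thanks to the anti-Hermiticity $(\alpha_j\alpha_k)^{*}=-\alpha_j\alpha_k$ for $j\ne k$ and the antisymmetry $B^{jk}=-B^{kj}$. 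The precise sign bookkeeping in this last step --- checking that all stray $\nabla_A u$ contributions drop, leaving only the clean contraction $\int\overline{u}\cdot\sum_{j<k}\alpha_j\alpha_k(\nabla\phi\cdot\nabla B^{jk})u$ --- is the main technical obstacle; all other manipulations are standard, and the approximation argument needed to legitimize them rigorously is supplied by Assumption (A), which lets us regularize initial data in $C^\infty_c(\mathbb{R}^n)$ and pass to the limit in $C(\mathbb{R},H^{3/2})$.
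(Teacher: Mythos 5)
Your proposal is correct and follows essentially the same route as the paper: both pass to the second-order equation $u_{tt}+\mathcal{H}^2u=0$, compute $\mathcal{H}^2=m^2-\Delta_A+i\sum_{j<k}\alpha_j\alpha_kB^{jk}$, and test against the Morawetz multiplier $Mu=2\nabla\phi\cdot\nabla_Au+u\Delta\phi$, decomposing the result into the scalar Morawetz identity for $-\Delta_A$, a vanishing mass contribution, and the new Dirac term coming from $i S\cdot B$. Your direct differentiation of $-\Re\int u_t\overline{Mu}$, with the cross term annihilated by skew-adjointness of $M$, yields $\Re\int\mathcal{H}^2u\,\overline{Mu}$, which is exactly the quantity the paper obtains by invoking the abstract virial lemma $\ddot\Theta=-\tfrac12([L,[L,\phi]]u,u)$ from \cite{BoussaidDAnconaFanelli11-a}, so the two computations are term-for-term equivalent.
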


\begin{remark}\label{radial}
  If $\phi=\phi(|x|)$ is a radial function, as we shall always assume 
  in the following, the virial identity can be considerably simplified.
  In particular, notice that
  $$
  \sum_{j<k}\alpha_j\alpha_k(\nabla\phi\cdot
  \nabla B^{jk})=
  \phi'(|x|)\sum_{j<k}\alpha_j\alpha_k\partial_rB^{jk}.
  $$
\end{remark}

As a direct consequence of the previous virial identity, 
we can prove a smoothing estimate for the $n$-dimensional 
magnetic Dirac equation \eqref{Hprob}.
\\In the following we shall denote respectively with 
$\nabla_A^r u$ and $\nabla^\tau_Au$ 
the radial and tangential components of the covariant gradient, 
namely
$$
\nabla_A^ru:=\displaystyle\frac{x}{|x|}\cdot\nabla_Au,\qquad
\nabla_A^\tau u:=\nabla_Au-\frac{x}{|x|}\cdot\nabla_A^ru
$$
so that
$$
|\nabla^r_A u|^2+|\nabla^\tau_A u|^2=|\nabla_A u|^2.
$$
We shall use the notation
\begin{equation*}
  [B]_{1}=\sum_{j,k=1}^{n}|B^{jk}|
\end{equation*}
to denote the $\ell^{1}$ norm of a matrix (i.e.~the sum of the
absolute values of its entries), and we shall measure the size
of matrix valued functions using norms like
\begin{equation*}
  \|B\|_{L^{\infty}}=\|[B(x)]_{1}\|_{L^{\infty}_{x}}
\end{equation*}

Then we have:

\begin{theorem}[Smoothing estimates]\label{smootheo}
  Let $n\geq4$.
  Let the operator $\mathcal{H}$ defined in (\ref{H}) satisfies
  assumption $(A)$. Let $B=DA-DA^{t}=B_{1}+B_{2}$
  with $B_2\in L^\infty$, and assume that 
  \begin{equation}\label{hp1}
   \displaystyle
    |B_\tau(x)|\leq\frac{C_1}{|x|^2},\quad 
    \frac{1}{2}[\partial_rB(x)]_1\leq\frac{C_2}{|x|^3}
 \end{equation}
 for all $x\in\mathbb{R}^n$ and 
for some constants $C_1$, $C_2$ such that
  \begin{equation}\label{hpC}
   \displaystyle C^2_1+2C_2\leq\frac{2}{3}(n-1)(n-3)
  \end{equation}
  Assume moreover that
  $$
C_0=\||x|^2B_1\|_{L^\infty(\mathbb{R}^n)}<\frac{(n-2)^2}{4}.
  $$
  Finally, in the massless case restrict the choice to
  $B_{1}=B$, $B_{2}=0$ in the above assumptions.
  
  Then for all $f\in L^2$ the following smoothing estimate holds
  \begin{equation}\label{smooth}
    \sup_{R>0}\int_{-\infty}^{+\infty}\int_{|x|
    \leq R}|e^{it\mathcal{H}}f|^2dxdt\lesssim\|f\|_{L^2}^2.
  \end{equation}
\end{theorem}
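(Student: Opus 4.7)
The plan is to apply the virial identity \eqref{vir} with the radial multiplier $\phi(x)=|x|$ (regularized near the origin, e.g.\ by $\phi_\epsilon=(|x|^2+\epsilon^2)^{1/2}$ with the limit $\epsilon\to 0$ taken at the end), and then to integrate in time. The hypotheses \eqref{hp1}--\eqref{hpC} together with $n\ge4$ are chosen precisely so that the resulting left-hand side dominates a positive multiple of $\iint|u|^2/|x|^3\,dx\,dt$.

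With this choice, $D^2\phi=|x|^{-1}(I-x\otimes x/|x|^2)$ projects onto tangential directions, and, as a locally integrable function on $\mathbb{R}^n$ (since $n\ge 4$), $\Delta^2\phi=-(n-1)(n-3)|x|^{-3}$. Hence the first two terms in \eqref{vir} become $2\int|\nabla_A^\tau u|^2/|x|$ and $\tfrac{(n-1)(n-3)}{2}\int|u|^2/|x|^3$. Since $\nabla\phi=x/|x|$, the third term reads $2\int\Im(uB_\tau\cdot\overline{\nabla_A^\tau u})$ ($B_\tau$ being tangential by Definition \ref{B}), and by Remark \ref{radial} the fourth reduces to $\int\bar u\sum_{j<k}\alpha_j\alpha_k(\partial_r B^{jk})u$.

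The next step is to absorb the third and fourth terms into the first two. Using $|B_\tau|\le C_1/|x|^2$ and Cauchy--Schwarz one has, for any $A>0$,
\begin{equation*}
\Big|2\int\Im\big(uB_\tau\cdot\overline{\nabla_A^\tau u}\big)\Big|\le AC_1\int\frac{|u|^2}{|x|^3}+\frac{C_1}{A}\int\frac{|\nabla_A^\tau u|^2}{|x|},
\end{equation*}
while the fourth term is bounded by $C_2\int|u|^2/|x|^3$ via the hypothesis on $\tfrac{1}{2}[\partial_r B]_1$ and $\|\alpha_j\|=1$. Choosing $A=C_1/2$ absorbs the tangential-gradient part exactly, leaving the coefficient $\tfrac{(n-1)(n-3)-C_1^2-2C_2}{2}\ge\tfrac{(n-1)(n-3)}{6}$ on $\int|u|^2/|x|^3$ thanks to \eqref{hpC}.

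Integrating in $t\in[-T,T]$ collapses the right-hand side of \eqref{vir} to a boundary term of the form $\big[\Re\!\int u_t(2(x/|x|)\cdot\overline{\nabla_A u}+(n-1)\bar u/|x|)\big]_{-T}^T$. The hard part will be to bound this uniformly in $T$ by $\|f\|_{L^2}^2$: a direct Cauchy--Schwarz combined with the $L^2$-conservation of $\mathcal{H}u$ gives only an $H^1$ bound on $f$, so one must exploit the first-order Dirac structure $u_t=i\mathcal{H}u$ together with the anticommutation relations \eqref{matrices} to rewrite the boundary integrand as a quadratic form in $u$ alone; the hypothesis $C_0<(n-2)^2/4$ will enter precisely through Hardy's inequality to keep the inverse-square contribution of $B_1$ subcritical. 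Letting $T\to\infty$ and invoking density of $C_c^\infty$ in $L^2$ (second part of (A)) then yields $\iint|u|^2/|x|^3\,dx\,dt\lesssim\|f\|_{L^2}^2$, from which the stated estimate \eqref{smooth} follows by a dyadic decomposition in $|x|$.
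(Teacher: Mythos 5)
Your proposal reproduces the first half of the argument (the ``Step 1'' absorption estimates with $\phi(x)=|x|$ and the arithmetic leading to the coefficient $\frac{(n-1)(n-3)-C_1^2-2C_2}{2}$) correctly, but it stops short in two places that are not technicalities.

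First, with the unperturbed multiplier $\phi(x)=|x|$ alone, the left side of the virial identity only controls the weighted quantity $\iint |u|^2/|x|^3$ and $\iint|\nabla_A^\tau u|^2/|x|$. These do \emph{not} imply the Morrey--Campanato estimate $\sup_{R>0}\frac{1}{R}\iint_{|x|\le R}|u|^2\lesssim\|f\|^2$ stated in \eqref{smooth}: the trivial bound gives
\begin{equation*}
\frac{1}{R}\int_{|x|\le R}|u|^2\le R^2\int\frac{|u|^2}{|x|^3},
\end{equation*}
which is useless as $R\to\infty$, and no dyadic decomposition fixes this. The paper obtains the local-smoothing norm from the \emph{perturbed} multiplier $\tilde\phi_R=\phi+\varphi_R$, whose bilaplacian $\Delta^2\varphi_R$ carries a surface delta $-\frac{n-1}{2R^2}\delta_{|x|=R}$; that delta is precisely what produces the $\sup_{R>0}\frac{1}{R}\int_{|x|\le R}$ factor. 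This also explains the constant $\frac23$ in hypothesis \eqref{hpC}, which your calculation ``wastes'': it is needed because $\sup\tilde\phi_R'\le\frac32$ rescales $C_1,C_2$ in Step~2, not to leave slack in Step~1.

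Second, your treatment of the boundary term is speculative and the proposed route would fail. You acknowledge that Cauchy--Schwarz on $\Re\int u_t(2\nabla\phi\cdot\overline{\nabla_A u}+\bar u\Delta\phi)$ gives an $H^1$-level bound, and you suggest algebraically rewriting this term as a quadratic form in $u$ alone to reach $L^2$; there is no such rewriting — the term genuinely involves $\nabla_A u$. The paper instead accepts the $H^1$-level bound $\lesssim\|\mathcal{D}_A u(\pm T)\|^2$ (via the magnetic Hardy inequality of Proposition~\ref{Hardy}, which is where $C_0<(n-2)^2/4$ enters) and then applies a structural trick: it runs the whole argument for the solution $u$ of the Dirac equation with the mass flipped, and observes that $v:=\mathcal{D}_A u$ then solves the original problem $iv_t=\mathcal{H}v$ with data $g=\mathcal{D}_A f$. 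Both the local smoothing on the left and the boundary terms on the right are expressed in terms of $\mathcal{D}_A u=v$, so the estimate becomes an $L^2\to$ local-smoothing bound for $e^{it\mathcal{H}}g$, and density of $\mathrm{ran}(\mathcal{D}_A)$ finishes. Without this mass-flip step your argument only yields an estimate for $f$ in a nonhomogeneous $H^1$ space, not the claimed $L^2$ result.
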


The limitation to $n\ge3$ space dimensions is intrinsic in the
multiplier method; low dimensions $n=1,2$ require 
a different approach (see e.g.~\cite{DanconaFanelli06-a} for
a general result in dimension 1). In the present paper 
we shall only deal with the case $n\geq4$, the 3-dimensional case being exaustively 
discussed in \cite{BoussaidDAnconaFanelli11-a}. Notice that, as it often occurs, the 
three dimensional case yields different hypothesis on the potential, being slightly different 
the multiplicator that one needs to consider.

A natural application of the smoothing estimate \eqref{smooth}
is to derive Strichartz estimates for the perturbed flow
$e^{it\mathcal{H}}f$, both in the massless and massive case.
Our concluding result is the following:

\begin{theorem}[Strichartz estimates]\label{strichartz}
  Let $n\ge4$.
  Assume $\mathcal{H}$, $A$, $B$ are as in 
  Theorem \ref{smootheo}, and in addition assume that
  \begin{equation}\label{hpA}
    \sum_{j\in\mathbb{Z}}2^j\sup_{|x|\cong 2^j}|A|<\infty.
  \end{equation}
  Then the
  perturbed Dirac flow satisfies the Strichartz estimates
  \begin{equation}\label{pstrich1}
    \||D|^{\frac{1}{q}-\frac{1}{p}-\frac{1}{2}}
      e^{it\mathcal{H}}f\|
      _{L^pL^{q}}\lesssim\|f\|_{L^2}
  \end{equation}
  where, in the massless case $m=0$, the couple $(p,q)$
  is any wave admissibile, non-endpoint couple i.e.~such that
  \begin{equation}\label{adm1}
    \frac{2}{p}+\frac{n-1}{q}=\frac{n-1}{2},\qquad2
    < p\leq\infty\qquad\frac{2(n-1)}{n-3}> q\geq 2,
  \end{equation}
  while in the massive case the same bound holds
  for all Schr\"odinger adimmissible couple, non-endpoint
  $(p,q)$, i.e. such that
  \begin{equation}\label{adm2}
    \frac{2}{p}+\frac{n}{q}=\frac{n}{2},\qquad2<
    p\leq\infty\qquad\frac{2n}{n-2}> q\geq 2.
  \end{equation}
\end{theorem}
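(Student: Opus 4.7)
The plan is to derive the perturbed Strichartz estimates by combining the free Dirac Strichartz bounds with the smoothing estimate of Theorem \ref{smootheo}, glued together via Duhamel's formula, the Christ--Kiselev lemma, and a $TT^{*}$ factorization. Denote the unperturbed Dirac operator by $\mathcal{H}_{0}=\mathcal{D}+m\beta$; the squaring identity \eqref{sq2} gives $\mathcal{H}_{0}^{2}=-\Delta+m^{2}$, so $e^{it\mathcal{H}_{0}}$ is a matrix-valued combination of the half-wave propagators in the massless case and of the half-Klein--Gordon propagators in the massive case. The classical Strichartz estimates for these propagators then transfer to
\begin{equation*}
\bigl\||D|^{1/q-1/p-1/2}\,e^{it\mathcal{H}_{0}}f\bigr\|_{L^{p}L^{q}}\lesssim\|f\|_{L^{2}}
\end{equation*}
for the wave-admissible pairs \eqref{adm1} (massless case) and the Schr\"odinger-admissible pairs \eqref{adm2} (massive case); the fractional factor $|D|^{1/q-1/p-1/2}$ accounts for the gap between the Dirac and the wave/Klein--Gordon scaling.

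Since $\mathcal{H}=\mathcal{H}_{0}-\alpha\cdot A(x)$, Duhamel's formula yields
\begin{equation*}
u(t)=e^{it\mathcal{H}_{0}}f+i\int_{0}^{t}e^{i(t-s)\mathcal{H}_{0}}\bigl(\alpha\cdot A(x)\,u(s)\bigr)\,ds,
\end{equation*}
and the first term is directly controlled by the previous display. For the inhomogeneous piece, the assumption $p>2$ allows the Christ--Kiselev lemma to replace $\int_{0}^{t}$ by the untruncated $\int_{\mathbb{R}}$, and a standard $TT^{*}$ factorization through $L^{2}$ reduces matters to the bound
\begin{equation*}
\biggl\|\int_{\mathbb{R}}e^{-is\mathcal{H}_{0}}\bigl(\alpha\cdot A(x)\,u(s)\bigr)\,ds\biggr\|_{L^{2}(\mathbb{R}^{n})}\lesssim\|f\|_{L^{2}}.
\end{equation*}

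To obtain this last estimate I would invoke the dual of the free smoothing estimate (Theorem \ref{smootheo} applied to $\mathcal{H}_{0}$, whose hypotheses are trivially satisfied with $A\equiv 0$). Writing $X$ for the smoothing norm appearing in \eqref{smooth} and $Y$ for its dual, this reads
\begin{equation*}
\biggl\|\int_{\mathbb{R}}e^{-is\mathcal{H}_{0}}F(s)\,ds\biggr\|_{L^{2}}\lesssim\|F\|_{Y},
\end{equation*}
where $Y$ is an $\ell^{1}$-amalgam norm over the dyadic annuli $\{|x|\sim 2^{j}\}$. A dyadic decomposition of $F=\alpha\cdot A u$ together with H\"older's inequality on each annulus produces
\begin{equation*}
\|Au\|_{Y}\lesssim\Bigl(\sum_{j\in\mathbb{Z}}2^{j}\sup_{|x|\sim 2^{j}}|A|\Bigr)\,\|u\|_{X},
\end{equation*}
in which the prefactor is finite by \eqref{hpA}, and $\|u\|_{X}\lesssim\|f\|_{L^{2}}$ by Theorem \ref{smootheo} applied to the perturbed flow itself. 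Combining the three bounds closes the estimate.

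The most delicate point is the precise identification of the duality pairing: matching the smoothing norm $X$ of \eqref{smooth} with a weighted dyadic amalgam $Y$ and checking that the weight extracted in the H\"older step is exactly the $2^{j}$ appearing in \eqref{hpA}. Once this bookkeeping is in place the remaining pieces fit together mechanically; the distinction between the massive and massless admissibility classes enters only through the free Strichartz input of the first step.
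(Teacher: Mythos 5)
Your proposal is correct and follows essentially the same route as the paper: Duhamel, Christ--Kiselev to pass to the untruncated integral, the free Strichartz bound of Proposition \ref{free1} (itself obtained by squaring to wave/Klein--Gordon), the dual of the free Dirac local smoothing estimate in the Perthame--Vega Morrey--Campanato norm, a dyadic H\"older step extracting the weight \eqref{hpA}, and finally the perturbed smoothing estimate of Theorem \ref{smootheo} to close the argument. The only cosmetic difference is that you phrase the passage through $L^{2}$ as a $TT^{*}$-type factorization, while the paper simply pulls $e^{it\mathcal{D}}$ out of the untruncated integral and applies the free Strichartz bound again; these are the same step.
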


The paper is organized as follows: in Section 2 we shall prove Theorem
\ref{virialdir}, deriving it from a classical virial identity for the 
wave equation (see Theorem \ref{virialwave}) plus the algebric structure 
of the Dirac operator.
In Section 3 we shall use the multiplicator technique to prove 
the smoothing estimate (\ref{smooth}) from Theorem 
\ref{virialdir}.
Finally in Section 4 we shall derive the Strichartz estimates of 
Theorem \ref{strichartz} by a perturbative argument based on
the smoothing estimates. 
Section 5 is devoted to the proof of a magnetic Hardy inequality 
for the Dirac operator, needed at several steps in the proof
of the previous theorems.

\section{Proof of the virial identity}

\noindent Let $u$ be a solution to equation (\ref{eq1}). Using identity 
$$
0=(i\partial_t-\mathcal{H})(i\partial_t+\mathcal{H})u=(-\partial_{tt}-\mathcal{H}^2)u,
$$
we see that $u$ solves the Cauchy problem for a magnetic wave equation:
\begin{equation}\label{pb1}
\begin{cases}
u_{tt}+\mathcal{H}^2u=0\\
u(0)=f\\
u_t(0)=i\mathcal{H}f.
\end{cases}
\end{equation}
In \cite{BoussaidDAnconaFanelli11-a} the following general result
was proved for a solution $u(t,x)$ of wave-type equations:

\begin{theorem}[\cite{BoussaidDAnconaFanelli11-a}]
  \label{virialwave}
  Let $L$ be a selfadjoint operator on $L^{2}(\mathbb{R}^{n})$,
  and let $u(t,x)$ be a solution of the equation
  $$
  u_{tt}(t,x)+Lu(t,x)=0.
  $$  
  Let $\phi:\mathbb{R}^n\rightarrow\mathbb{R}$ and define the quantity
  \begin{equation}\label{theta}
  \Theta(t)=(\phi u_t,u_t)+\mathcal{R}((2\phi L-L\phi)u,u).
  \end{equation}
  Then $u(t,x)$ satisfies the formal virial identities
  \begin{equation}\label{vir1}
    \dot{\Theta}(t)=\mathcal{R}([L,\phi]u,u_t)
    \end{equation}
    \begin{equation}\label{vir2}
    \ddot{\Theta}(t)=\displaystyle-\frac{1}{2}([L,[L,\phi]]u,u).
  \end{equation}
\end{theorem}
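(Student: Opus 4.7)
The plan is to establish both identities by differentiating $\Theta(t)$ in time, eliminating $u_{tt}$ through the equation $u_{tt}=-Lu$, and exploiting the selfadjointness of $L$ together with the (obvious) selfadjointness of multiplication by the real-valued function $\phi$. The algebraic fact I lean on throughout is that, because $L$ and $\phi$ are both selfadjoint, the commutator $[L,\phi]$ is anti-selfadjoint while the double commutator $[L,[L,\phi]]$ is selfadjoint. These two observations are what make the real parts collapse in the right way.

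For \eqref{vir1} I differentiate the two pieces of $\Theta$ separately. The kinetic piece gives
\[
\frac{d}{dt}(\phi u_t,u_t)=2\mathcal{R}(\phi u_{tt},u_t)=-2\mathcal{R}(\phi L u,u_t)=-2\mathcal{R}(L\phi u_t,u),
\]
the last equality following from the selfadjointness of $\phi$ and $L$, which lets me move them across the inner product in turn. For the potential piece, set $A=2\phi L-L\phi$, so that $A^{\ast}=2L\phi-\phi L$ and $A+A^{\ast}=\phi L+L\phi$; a direct computation then yields
\[
\frac{d}{dt}\mathcal{R}(Au,u)=\mathcal{R}\bigl((A+A^{\ast})u_t,u\bigr)=\mathcal{R}\bigl((\phi L+L\phi)u_t,u\bigr).
\]
Adding the two contributions cancels the $L\phi u_t$ terms and gives $\dot\Theta(t)=\mathcal{R}([\phi,L]u_t,u)=-\mathcal{R}([L,\phi]u_t,u)$; the anti-selfadjointness of $[L,\phi]$ then rewrites the right-hand side as $\mathcal{R}([L,\phi]u,u_t)$, which is precisely \eqref{vir1}.

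For \eqref{vir2} I differentiate \eqref{vir1} once more, obtaining
\[
\ddot\Theta(t)=\mathcal{R}([L,\phi]u_t,u_t)+\mathcal{R}([L,\phi]u,u_{tt}).
\]
The first term vanishes because $([L,\phi]v,v)$ is purely imaginary for every $v$, by anti-selfadjointness. In the second term I substitute $u_{tt}=-Lu$ and use the selfadjointness of $L$ to move it onto the first slot, obtaining $-\mathcal{R}(L[L,\phi]u,u)$. The decomposition
\[
L[L,\phi]=\tfrac{1}{2}\bigl(L[L,\phi]+[L,\phi]L\bigr)+\tfrac{1}{2}[L,[L,\phi]]
\]
separates an anti-selfadjoint anticommutator part (which contributes nothing to $\mathcal{R}$) from a selfadjoint double-commutator part, yielding $\ddot\Theta(t)=-\tfrac{1}{2}([L,[L,\phi]]u,u)$, as required.

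The only genuine obstacle is the justification of these formal manipulations: one needs $u$ and $u_t$ to lie in domains rich enough that the commutators and compositions above are well defined and the adjoint moves hidden in the selfadjointness of $L$ are legitimate. In the abstract statement this regularity is built into the hypotheses; in the concrete application to $L=\mathcal{H}^{2}$ arising from \eqref{pb1}, it is precisely the density clause in assumption~(A) that furnishes smooth approximating data, so that the identities can first be established on a dense class and then extended by limiting arguments.
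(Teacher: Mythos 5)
Your argument is correct and complete. The paper itself states Theorem~\ref{virialwave} as a citation from \cite{BoussaidDAnconaFanelli11-a} and gives no proof, so there is no internal argument to compare against; but what you supply is exactly the standard direct verification such a virial identity admits. The structural observations you isolate --- that $[L,\phi]$ is anti-selfadjoint and $[L,[L,\phi]]$ selfadjoint whenever $L$ and $\phi$ are selfadjoint --- are precisely what makes the real parts collapse, and both differentiation steps, the elimination of $u_{tt}$ via $u_{tt}=-Lu$, the identification $A+A^{*}=\phi L+L\phi$, the vanishing of $\mathcal{R}([L,\phi]u_t,u_t)$, and the anticommutator/commutator splitting of $L[L,\phi]$ are all carried out correctly. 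One minor remark worth keeping in mind: the right-hand side of \eqref{vir2} appears without an explicit $\mathcal{R}$, which is consistent because $[L,[L,\phi]]$ is selfadjoint so $([L,[L,\phi]]u,u)$ is already real --- your computation produces the real part, and it is worth saying so to close the loop. Your closing paragraph on the formal nature of these identities and the role of the density clause in assumption~(A) is also accurate and is exactly the caveat that the word ``formal'' in the statement is meant to signal.
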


In order to apply this proposition to our case we thus need to compute explicitly the commutators in (\ref{vir1}), (\ref{vir2}) with the choice $L=\mathcal{H}^2$.
We begin by expanding the square
\begin{equation*}%
  \mathcal{H}^2=
  (\mathcal{H}_{0}-\alpha \cdot A)^{2}=
  \mathcal{H}^2_0-\mathcal{H}_0(\alpha\cdot A)-(\alpha\cdot A)
  \mathcal{H}_0+(\alpha\cdot A)(\alpha\cdot A),
\end{equation*}
and we recall that the unperturbed part of the operator
\begin{equation*}
  \mathcal{H}_{0}=
  \mathcal{D}+m \beta=
  i^{-1}\alpha \cdot \nabla+m \beta
\end{equation*}
satisfies
$$
\mathcal{H}^2_0=(m^2-\Delta)\mathbb{I}_M.
$$
Since $\beta$ anticommutes with each $\alpha_{j}$ we get
\begin{equation}\label{Hsquare}
  \mathcal{H}^2=
  \mathcal{H}_{0}^{2}-
  i^{-1}(\alpha \cdot \nabla)(\alpha \cdot A)-
  i^{-1}(\alpha \cdot A)(\alpha \cdot \nabla)+
  (\alpha\cdot A)(\alpha\cdot A).
\end{equation}

We need a notation to distinguish the composition of the
operators (multiplication by) $A_{k}$ and $\partial_{j}$,
which we shall denote with $\partial_{j}\circ A^{k}$, i.e.,
\begin{equation*}
  \partial_{j}\circ A^{k}u= \partial_{j}(A^{k}u)
\end{equation*}
and the simple derivative $\partial_{j}A^{k}$.
After a few steps we obtain (we omit for simplicity
the factor $\mathbb{I}_{M}$ in diagonal operators)
\begin{equation*}
  \mathcal{H}^2=\mathcal{H}_0^2+
     i(\nabla\cdot A)+
     i(A\cdot\nabla)+
     |A|^2+
     i\sum_{j\neq k}^n\alpha_j\alpha_k
     (\partial_j\circ A^k+A^j\partial_k).
\end{equation*}
or equivalently
\begin{equation}\label{Hsquare2}
  \mathcal{H}^2=
     (m^{2}-\Delta_{A})+
     i\sum_{j\neq k}^n\alpha_j\alpha_k
     (\partial_j\circ A^k+A^j\partial_k),
\end{equation}
where
\begin{equation*}
  \Delta_{A}=(\nabla-i A)^{2}=\nabla_{A}^{2}.
\end{equation*}
Now we observe that
\begin{equation*}
\begin{split}
  \sum_{j\neq k}
    \alpha_j\alpha_k
    &
    (\partial_j\circ A^k+A^j\partial_k)
  \\
  =&\sum_{j< k}
  \alpha_j\alpha_k
  [(\partial_j\circ A^k+A^j\partial_k)-
        (\partial_k\circ A^j+A^k\partial_j)]=
    \\
  =&\sum_{j<k}
    \alpha_j\alpha_k
    (\partial_jA^k-\partial_kA^j)
    \\
  =&\sum_{j<k}
    \alpha_j\alpha_kB^{jk}=
    \\ 
  =&\frac14\sum_{j,k=1}^{n}(\alpha_{j}\alpha_{k}-\alpha_{k}\alpha_{j})
     B^{jk}
\end{split}
\end{equation*}
since $B$ is skewsymmetric.
If we introduce the matrix $S=[S_{jk}]$ whose entries are the matrices
\begin{equation*}
  S_{jk}=\frac14(
  \alpha_{j}\alpha_{k}-\alpha_{k}\alpha_{j})
  \equiv \frac12\alpha_{j}\alpha_{k}
\end{equation*}
and we use the notation 
\begin{equation*}
  [a_{jk}]\cdot[b_{jk}]=\sum_{j,k=1}^{n}a_{jk}b_{jk}
\end{equation*}
for the scalar product of matrices, the above identity can be
compactly written in the form
\begin{equation*}
  \sum_{j\neq k}
    \alpha_j\alpha_k
    (\partial_j\circ A^k+A^j\partial_k)=
  S \cdot B.
\end{equation*}
In conclusion we have proved that
\begin{equation}\label{Hsquare3}
  \mathcal{H}^2=(m^2-\Delta_A)\mathbb{I}_M
  +iS \cdot B
\end{equation}
and hence for the massless case
\begin{equation}\label{Hsquare3bis}
  \mathcal{D}^2_A=-\Delta_A\mathbb{I}_M 
  +iS \cdot B.
\end{equation}
Thus the commutator with $\phi$ reduces to
$$
[\mathcal{H}^2,\phi]=[m^2,\phi]-[\Delta_A,\phi]+i[S\cdot B,\phi]=-[\Delta_A,\phi].
$$
Using the Leibnitz rule
$$
\nabla_A(fg)=g\nabla_Af+f\nabla g,
$$
we arrive at the explicit formula
\begin{equation}\label{comm1}
[\mathcal{H}^2,\phi]=-[\Delta_A,\phi]=-2\nabla \phi\cdot\nabla_A-(\Delta\phi).
\end{equation}
Recalling (\ref{theta}) and (\ref{vir1}) we thus obtain
\begin{equation}\label{comm1bis}
\dot{\Theta}(t)=-\Re\displaystyle\int_{\mathbb{R}^n}u_t(2\nabla\phi\cdot\overline{\nabla_Au}+\overline{u}\Delta\phi).
\end{equation}
We now turn to the second commutator. By formulas (\ref{Hsquare3}) and (\ref{comm1}) we have
\begin{equation}\label{comm2}
[\mathcal{H}^2,[\mathcal{H}^2,\phi]]=[\Delta_A,[\Delta_A,\phi]]
-i[S\cdot B,[\Delta_A,\phi]].
\end{equation}
The first commutator is well known and was computed e.g.~in
\cite{DanconaFanelliVega10-a}; taking formula (2.19) there
(with $V \equiv0$) we obtain
\begin{equation}\label{2.19}
  (u,[\Delta_A,[\Delta_A,\phi]])=
  4\int_{\mathbb{R}^n}
  \nabla_AuD^2\phi\overline{\nabla_Au}-
  \int_{\mathbb{R}^n}|u|^2\Delta^2\phi+
\end{equation}
$$
  +4\Im\int_{\mathbb{R}^n}
  u\nabla\phi B_\tau\cdot\overline{\nabla_Au}.
$$
By (\ref{comm1}) the last term in (\ref{comm2}) becomes
$$
[S\cdot B,[\Delta_A,\phi]]=2[S\cdot B,\nabla\phi\cdot\nabla_A]=
$$
$$
=2(S\cdot B\nabla\phi\cdot\nabla_A-\nabla\phi\cdot\nabla_AS\cdot B)=
$$
$$
=\sum_{j<k}\alpha_j\alpha_k B^{jk}\nabla\phi\cdot\nabla_A-\nabla\phi\cdot\nabla_A\sum_{j<k}\alpha_j\alpha_k B^{jk}=
$$
$$
=\displaystyle\sum_{j<k}\alpha_j\alpha_k[B^{jk},\nabla\phi\cdot\nabla_A]=
$$
\begin{equation}\label{comm22}
=\displaystyle-\sum_{j<k}\alpha_j\alpha_k(\nabla\phi\cdot\nabla B^{jk}).
\end{equation}
Identity (\ref{vir}) then follows from (\ref{vir2}), (\ref{comm1bis}), (\ref{comm2}), (\ref{2.19}) and (\ref{comm22}).

\section{Smoothing estimates}

We shall use the following radial multiplier (for a detailed description see \cite{FanelliVega09-a}, \cite{BoussaidDAnconaFanelli11-a}):
\begin{equation}\label{tilde}
\tilde{\phi}_R(x)=\phi(x)+\varphi_R(x)
\end{equation}
where 
$$
\phi(x)=|x|
$$
for which we have
$$
\phi'(r)=1,\qquad\phi''(r)=0,\qquad
\Delta^2\phi(r)=\displaystyle-\frac{(n-1)(n-3)}{r^3}
$$
with the notation $r=|x|$, and $\varphi_R$ is the rescaled $
\displaystyle\varphi_R(r)=R\varphi_0(\frac{r}{R}),
$
of the multiplier 
\begin{equation}\label{varphi1}
\varphi_0(r)=\int_0^r\varphi'(s)ds
\end{equation}
where
\begin{equation}\label{varphi2}
\varphi_0'(r)=
\begin{cases}
\frac{n-1}{2n}r,\quad r\leq1
\\
\frac{1}{2}-\frac{1}{2nr^{n-1}},\quad r>1
\end{cases}
\end{equation}
and so
$$
\varphi_0''(r)=
\begin{cases}
\frac{n-1}{2n},\quad r\leq1
\\
\frac{n-1}{2nr^n},\quad r>1.
\end{cases}
$$
Thus we have 
\begin{equation}\label{der1}
\varphi_R'(r)=
\begin{cases}
\frac{(n-1)r}{2nR},\quad r\leq R
\\
\frac{1}{2}-\frac{R^{n-1}}{2nr^{n-1}},\quad r>R
\end{cases}
\end{equation}
\begin{equation}\label{der2}
\varphi_R''(r)=
\begin{cases}
\frac{1}{R}\frac{n-1}{2n},\quad r\leq R
\\
\frac{1}{R}\frac{R^n(n-1)}{2nr^n},\quad r>R
\end{cases}.
\end{equation}
\begin{equation}\label{bil}
\Delta^2\varphi_R=\displaystyle-\frac{n-1}{2R^2}\delta_{|x|=R}-\frac{(n-1)(n-3)}{2r^3}\chi_{[R,+\infty)}.
\end{equation}
Notice that $\varphi_R'$, $\varphi_R''$, $\Delta\varphi_R\geq0$ and moreover
$
\displaystyle\sup_{r\geq 0}\varphi'(r)\leq\frac{1}{2}.
$
\\
Thus it's easy to show the bounds for the derivatives of the perturbed multiplier
\begin{equation}\label{boundsmult}
\displaystyle\sup_{r\geq 0}\tilde{\phi}_R'\leq\frac{3}{2},\qquad\Delta\tilde{\phi}_R\leq\frac{n}{r}.
\end{equation}
\\We separate the estimates of the LHS and the RHS of (\ref{vir})\\\\
\textbf{Estimate of the RHS of (\ref{vir})}\\
Consider the expression
$$
\displaystyle\int_{\mathbb{R}^n}u_t(2\nabla\phi\cdot\overline{\nabla_A u}+u\Delta\phi)=
(u_t,2\nabla\phi\cdot\nabla_Au+\overline{u}\Delta\phi)_{L^2}
$$
appearing at the right hand side of (\ref{vir}). Since $u$ solves the equation we can replace $u_t$ with
$$
u_t=-i\mathcal{H}u=-im\beta u-i\mathcal{D}_Au.
$$
By the selfadjointess of $\beta$ it is easy to check that
$$
\Re[-im(\beta u,2\nabla\phi\cdot\nabla_Au)-im(\beta u,\Delta\phi u)]=0
$$
so that
$$
\Re[(u_t,2\nabla\phi\cdot\nabla_Au+u\Delta\phi)=2\mathcal{I}(\mathcal{D}_A u,\nabla\phi\cdot\nabla_A u)]+\mathcal{I}(\mathcal{D}_Au,\Delta\phi u)
$$
and by Young inequality we obtain
\begin{equation}\label{3.7}
\left|\Re\left(\int_{\mathbb{R}^n}u_t(2\nabla\phi\cdot\overline{\nabla_A u}+u\Delta\phi)\right)\right|\leq
\frac{3}{2}\|\mathcal{D}_Au\|_{L^2}^2+\|\nabla\phi\cdot\nabla_Au\|^2_{L^2}+\frac{1}{2}\|u\Delta\phi\|^2_{L^2}.
\end{equation}
Now we put in (\ref{3.7}) the multiplicator $\tilde{\phi}$ defined in (\ref{tilde}). From the boundedness of $\varphi$ and the magnetic Hardy inequality (\ref{hardyineq})
 we have, with the choice $\varepsilon=(n-2)^2-4C_0$ which is positive in virtue of the assumption $C_0<(n-2)^2/4$, 
\begin{equation}\label{3.8}
\|\nabla\tilde{\phi}\cdot\nabla_Au\|^2_{L^2}\leq \frac{3}{2}\displaystyle\frac{1}{(n-2)^2-4C_0}\|\mathcal{D}_Au\|^2_{L^2}.
\end{equation}
The third term in (\ref{3.7}) can be estimated again using Hardy inequality with
\begin{equation}\label{3.9}
\|u\Delta\tilde{\phi}\|^2_{L^2}\leq\displaystyle\frac{4n}{(n-2)^2-4C_0}\|\mathcal{D}_Au\|^2_{L^2}.
\end{equation}
Summing up, by (\ref{3.7}), (\ref{3.8}) and (\ref{3.9}) we can conclude
\begin{equation}\label{3.10}
\left| \Re\left(\int_{\mathbb{R}^n}u_t(2\nabla\phi\cdot\overline{\nabla_Au}+\overline{u}\Delta\phi)\right)\right|\leq c(n)\|\mathcal{D}_Au\|^2_{L^2}.
\end{equation}\\\\

\textbf{Estimate of the LHS of (\ref{vir})}\\
We shall make use of the following identity, that holds in every dimension:
\begin{equation}\label{dec}
\nabla_AuD^2\phi\overline{\nabla_Au}=\frac{\phi'(r)}{r}|\nabla^\tau_Au|^2+\phi''(r)|\nabla_A^ru|^2.
\end{equation}
For the seek of simplicity, we divide this part in two steps, first considering just the multiplier $\phi(r)=r$, for which the calculations turn out fairly straightforward, and then perturbating it to $\tilde{\phi}$.
\\\\\emph{Step 1}\\
With the choice $\phi(r)=r$, by (\ref{dec}) we can rewrite the LHS of (\ref{vir}) as follows:
\begin{equation}\label{vir2}
\displaystyle2\int_{\mathbb{R}^n}\frac{|\nabla^\tau_A u|^2}{|x|}dx+\frac{(n-1)(n-3)}{2}\int_{\mathbb{R}^n}\frac{|u|^2}{|x|^3}dx+
\end{equation}
$$
\displaystyle+2\int_{\mathbb{R}^n}\Im(uB_\tau\cdot\overline{\nabla_Au})dx
+\int_{\mathbb{R}^n}
       \overline{u}\cdot
       \sum_{j<k}\alpha_j\alpha_k
          \partial_r B^{jk}u.
$$
The first thing to be done is to prove this quantity to be positive. For what concerns the perturbative term, assuming that $$
|B_\tau|\leq\displaystyle\frac{C_1}{|x|^2}
$$
we have
\begin{equation}\label{Bt}
\displaystyle-\left|2\int_{\mathbb{R}^n}\Im(uB_\tau\cdot\overline{\nabla_Au})dx\right|\geq
-2\left(\int_{\mathbb{R}^n}\frac{|u|^2}{|x|^3}dx\right)^\frac{1}{2}
\left(\int_{\mathbb{R}^n}|x|^3|B_\tau|^2|\nabla^\tau_A u|^2dx\right)^\frac{1}{2}
\end{equation}
$$
\geq -2C_1K_1K_2,
$$
where
$$
K_1=\displaystyle\left(\int_{\mathbb{R}^n}\frac{|u|^2}{|x|^3}dx\right)^\frac{1}{2}
$$
$$
K_2=\displaystyle\left(\int_{\mathbb{R}^n}\frac{|\nabla^\tau_A u|^2}{|x|}dx\right)^\frac{1}{2}.
$$
Analogously, assuming
$$
 \Bigl\|\sum_{j<k} \alpha_{j}\alpha_{k}
    \partial_{r}B^{jk}(x)\Bigr\|_{M \times M}\le 
    \frac12
    [\partial_{r}B(x)]_{1}\leq \frac{C_2}{|x|^3}
$$
(recall that here $\|\cdot\|_{M\times M}$ denotes the operator norm of $M\times M$ matrices and $[\cdot]_{1}$ denotes 
the sum of absolute values of the entries of a matrix) 
we have
\begin{equation}\label{Br}
- \left|\int_{\mathbb{R}^n}
    \overline{u}\cdot
    \sum_{j<k}\alpha_j\alpha_k
      \partial_rB^{jk}udx
  \right|\geq
  -\int|u|^{2}
  \Bigl\|\sum_{j<k} \alpha_{j}\alpha_{k}
    \partial_{r}B^{jk}\Bigr\|_{M \times M}dx\geq -C_2K_1^2
\end{equation}
where $K_1$ is as before. Thus we have reached the following estimate
$$
\displaystyle2\int_{\mathbb{R}^n}\frac{|\nabla^\tau_A u|^2}{|x|}dx+\frac{(n-1)(n-3)}{2}\int_{\mathbb{R}^n}\frac{|u|^2}{|x|^3}dx+
$$
$$
\displaystyle+2\int_{\mathbb{R}^n}\Im(uB_\tau\cdot\overline{\nabla_Au})dx
+\int_{\mathbb{R}^n}
       \overline{u}\cdot
       \sum_{j<k}\alpha_j\alpha_k
          \partial_r B^{jk}u\geq
$$
$$
\geq 2K_2^2-2C_1K_1K_2-C_2K^2_1+\displaystyle\frac{(n-1)(n-3)}{2}K_1^2=:C(C_1,C_2,K_1,K_2).
$$
As usual, we want to optimize the condition on the constants $C_1$, $C_2$ under which the quantity $C$ is positive for all $K_1$, $K_2$. Fixing $K_1=1$ and requiring that
$$
\displaystyle\left(\frac{(n-1)(n-3)}{2}-C_2\right)K_1^2-2C_1K_1+2\geq0
$$
we can easily conclude that the resulting condition on the constants is given by
\begin{equation}\label{condC}
C_1^2+2C_2\leq(n-1)(n-3).
\end{equation}
Thus, if condition (\ref{condC}) is satisfied, we have that the quantity in (\ref{vir2}) is positive.
\\\\
\emph{Step 2}
\\We now perturb the multiplier to complete the proof. We thus put the multiplier $\tilde{\phi}_R$ as defined in (\ref{tilde}) in the LHS of (\ref{vir}), and repeat exactly the same calculations as in Step 1. With this choice, estimates (\ref{Bt}) and (\ref{Br}) still hold with the rescaled constants $\widetilde{C}_1=\frac{3}{2}C_1$, $\widetilde{C}_2=\frac{3}{2}C_2$ (since, as we have noticed, $\tilde\phi_R\leq\frac{3}{2}$), and thus the final conditions on the potential turn to be (\ref{hp1}), (\ref{hpC}).

Thus putting all together , taking the supremum over $R>0$, integrating in time and dropping the corresponding nonnegative terms we have reached the estimate
\begin{equation}\label{3.17}
\displaystyle2\int_{-T}^Tdt\int_{\mathbb{R}^n}\nabla_AuD^2\phi\overline{\nabla_Au}-\frac{1}{2}\int_{-T}^Tdt
\int_{\mathbb{R}^n}|u|^2\Delta^2\phi+
\end{equation}
$$
\displaystyle2\mathcal{I}\int_{-T}^Tdt\int_{\mathbb{R}^n}u\phi'B_\tau\cdot\overline{\nabla_Au}+\int_{-T}^Tdt\int_{\mathbb{R}^n}|u|^2\sum_{j<k}\alpha_j\alpha_k(\nabla\phi\cdot\nabla B^{jk})\geq
$$
$$
\geq\displaystyle
\sup_{R>0}\frac{1}{R}\int_{-T}^Tdt\int_{|x|\leq R}|\nabla_A u|^2dx\geq
$$
$$
\displaystyle\geq
\sup_{R>0}\frac{1}{R}\int_{-T}^Tdt\int_{|x|\leq R}|\mathcal{D}_A u|^2dx
$$
where in the last step we have used the pointwise inequality $|\mathcal{D}_Au|\leq|\nabla_A u|$. We now integrate in time the virial idetity on $[-T,T]$, and using (\ref{3.17}) and (\ref{3.10}) we obtain
\begin{equation}\label{3.18}
\displaystyle
\sup_{R>0}\frac{1}{R}\int_{-T}^Tdt\int_{|x|\leq R}|\mathcal{D}_A u|^2dx\lesssim\|\mathcal{D}_Au(T)\|^2_{L^2}+\|\mathcal{D}_Au(-T)\|^2_{L^2}.
\end{equation}
Let us now consider the range of $D_A$: from proposition (\ref{Hardy}) we have that for $C_0<(n-2)^2/4$ $0\not\in \ker(D_A)$, so $\rm{ran}(D_A)$ is either $L^2$ or it is dense in $L^2$. Fix now an arbitrary $g\in\rm{ran(D_A)}$, there exists $f\in D(D_A)=D(\mathcal{H})$ such that $D_Af=g$. We then consider the solution $u(t,x)$ to the problem
$$
\begin{cases}
iu_t=-m\beta u+\mathcal{D}_Au\\
u(0,x)=f(x)\\
\end{cases}
$$
with opposite mass, and notice that $u$ satisifies (\ref{3.18}) since no hypothesis on the sign of the mass $m$ have been used for it. If we thus apply to this equation the operator $\mathcal{D}_A$ we obtain, by the anticommutation rules,
$$
\begin{cases}
i(\mathcal{D}_Au)_t=\beta m(\mathcal{D}_Au)+\mathcal{D}_A(\mathcal{D}_Au)=0\\
\mathcal{D}_Au(0,x)=\mathcal{D}_Af(x)\\
\end{cases}
$$
or, in other words, the function $v=\mathcal{D}_Au$ solves the problem
$$
\begin{cases}
iv_t=\mathcal{H}v\\
v(0,x)=g\\
\end{cases}
$$
so that $v=e^{it\mathcal{H}}g$. Substituting in (\ref{3.18}) and letting $T\rightarrow\infty$ we conclude that
$$
\displaystyle\int_{-\infty}^\infty\|e^{it\mathcal{H}}f\|_{X}^2\lesssim\|g\|_{L^2}
$$
that is exactly (\ref{smooth}) for $g\in \rm{ran}(D_A)$, which is as we have noticed dense in $L^2$. Density arguments conclude the proof.

\section{Proof of the Strichartz estimates}

We begin by recalling the
Strichartz estimates for the free Dirac flow, both
in the massless and in the massive case. They are a direct consequence
of the corresponding estimates for the wave and Klein-Gordon
equations:

\begin{proposition}\label{free1}
Let $n\geq 3$. Then the following Strichartz estimates hold:

(i) in the massless case, for any wave admissible couple
$(p,q)$ (see \eqref{adm1})
\begin{equation}\label{fstrich1}
  \||D|^{\frac{1}{q}-\frac{1}{p}-\frac{1}{2}}
    e^{it\mathcal{D}}f\|
  _{L^pL^{q}}\lesssim \|f\|_{L^2};
\end{equation}

(ii) in the massive case, for any Schr\"odinger admissible couple
$(p,q)$ (see \eqref{adm2})
\begin{equation}\label{fstrich2}
  \||D|^{\frac{1}{q}-\frac{1}{p}-\frac{1}{2}}
    e^{it(\mathcal{D}+\beta)}f\|_{L^pL^{q}}
  \lesssim \|f\|_{L^2}.
\end{equation}
\end{proposition}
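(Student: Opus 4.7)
The plan is to reduce \eqref{fstrich1}--\eqref{fstrich2} to the Strichartz estimates for the free wave and Klein-Gordon equations via the squaring identity \eqref{sq2}. First I would differentiate once more in time: if $u(t)=e^{it\mathcal{H}}f$ with $\mathcal{H}=\mathcal{D}+m\beta$, then $u_{tt}=-i\mathcal{H}u_{t}=-\mathcal{H}^{2}u=-(-\Delta+m^{2})u$, so every scalar component of $u$ solves the Klein-Gordon equation (the wave equation when $m=0$) with initial data $u(0)=f$ and $u_{t}(0)=i\mathcal{H}f$. The spectral functional calculus then yields the representation
$$
u(t)=\cos(t\omega)f+\omega^{-1}\sin(t\omega)(i\mathcal{H}f),\qquad \omega:=\sqrt{-\Delta+m^{2}},
$$
reducing everything to the half-propagators $e^{\pm it\omega}$.

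The second ingredient I would need is that the matrix-valued operator $\mathcal{H}\omega^{-1}$ is bounded on $L^{q}(\mathbb{R}^{n})$ for every $1<q<\infty$: its Fourier multiplier symbol $(\alpha\cdot\xi+m\beta)/\sqrt{|\xi|^{2}+m^{2}}$ is smooth, bounded, and satisfies the Mikhlin--H\"ormander condition (in the massless case it is literally a matrix of Riesz transforms $\alpha\cdot\xi/|\xi|$). Together with the representation above, this gives
$$
\bigl\||D|^{\sigma}e^{it\mathcal{H}}f\bigr\|_{L^{p}L^{q}}\lesssim\sum_{\pm}\bigl\||D|^{\sigma}e^{\pm it\omega}f\bigr\|_{L^{p}L^{q}},\qquad \sigma=\tfrac{1}{q}-\tfrac{1}{p}-\tfrac{1}{2}.
$$

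To conclude, I would invoke the classical Strichartz estimates for the half-wave flow $e^{\pm it|D|}$ with wave-admissible pairs in the massless case; a short algebraic check using \eqref{adm1} identifies the exponent $\sigma$ with minus the wave scaling index $\frac{n}{2}-\frac{n}{q}-\frac{1}{p}$, so \eqref{fstrich1} follows at once. In the massive case the analogous Strichartz estimates for $e^{\pm it\omega}$ on Schr\"odinger-admissible pairs (of Ginibre--Velo type, sharpened by Keel--Tao at the endpoints) yield \eqref{fstrich2} by the same procedure.

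I do not expect any genuine obstacle here: the proposition is essentially a packaging of standard free dispersive estimates once the Dirac squaring trick is in place. The only items requiring mild care are the Mikhlin-type boundedness of $\mathcal{H}\omega^{-1}$ (together with the fact that $\mathcal{H}$ and $\omega$ commute, which follows from $\mathcal{H}^{2}=\omega^{2}\mathbb{I}_{M}$) and the bookkeeping that matches $\sigma$ to the regularity index of the underlying wave or Klein-Gordon Strichartz estimate in each admissibility class.
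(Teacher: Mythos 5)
Your proof is correct and follows the same route as the paper: both use the squaring identity \eqref{sq2} to reduce $e^{it\mathcal{D}}f$ (resp. $e^{it(\mathcal{D}+\beta)}f$) to solutions of the free wave (resp. Klein--Gordon) equation with data $(f,i\mathcal{H}f)$, and then invoke the classical Ginibre--Velo/Keel--Tao Strichartz estimates. You simply spell out two steps the paper leaves implicit (the $\cos/\sin$ representation and the $L^q$-boundedness of the multiplier $\mathcal{H}\omega^{-1}$ --- for which, incidentally, $L^2$-boundedness already suffices since the Strichartz input is $\|f\|_{L^2}$), but the underlying argument is identical.
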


\begin{proof}
We restrict the proof to the case $n\geq$, refering to \cite{BoussaidDAnconaFanelli11-a} for an exaustive 
proof of the 3-dimensional case.\\
Recalling identity (\ref{sq2}) we immediately have that $u(t,x)=e^{it\mathcal{D}}f$ and $v(t,x)=e^{it\mathcal({D}+\beta)}$ 
satisfy the two Cauchy problems
\begin{equation}\label{syst1}
\begin{cases}
u_{tt}-\Delta u=0\\
u(0,x)=f(x)\\
u_t(0,x)=i\mathcal{D}f,
\end{cases}
\end{equation}
\begin{equation}\label{syst2}
\begin{cases}
v_{tt}-\Delta  v+mv=0\\
v(0,x)=f(x)\\
v_t(0,x)=i(\mathcal{D}+\beta)f,
\end{cases}
\end{equation}
and so each component of the $M$-dimensional vectors $u$ and $v$ 
satisfy the same Strichartz estimates as for the $n$-dimensional 
wave equation and Klein-Gordon equation respectively.
Thus case (i) follows from the standard estimates proved in
\cite{GinibreVelo95-a} and \cite{KeelTao98-a}, while
case (ii) follows from similar techniques (the details can be
found e.g.~in the Appendix of \cite{DanconaFanelli08-a}).
\end{proof}

We turn now to the perturbed flow. 
In the massless case, from the Duhamel formula we can write
\begin{equation}\label{duh}
  u(t,x)\equiv e^{it \mathcal{D}_{A}}f
  =e^{it\mathcal{D}}f+
  \int_0^te^{i(t-s)\mathcal{D}}\alpha\cdot Au(s)ds.
\end{equation}
The term $e^{it\mathcal{D}}f$ can be directly estimated with 
(\ref{fstrich1}). For the perturbative term we follow the 
Keel-Tao method \cite{KeelTao98-a}:
by a standard application of the Christ-Kiselev Lemma, 
since we only aim at 
the non-endpoint case, it is sufficient to estimate 
the untruncated integral
$$
\int e^{i(t-s)\mathcal{D}}\alpha\cdot Au(s)ds=
e^{it\mathcal{D}}\int e^{-is\mathcal{D}}\alpha\cdot Au(s)ds.
$$
Using again (\ref{fstrich1}) we have 
\begin{equation}\label{christ}
  \left\||D|^{\frac{1}{q}-\frac{1}{p}-\frac{1}{2}}
    e^{it\mathcal{D}}\int e^{-is\mathcal{D}}\alpha\cdot Au(s)ds 
  \right\|_{L^pL^{q}}
  \lesssim
  \left\|\int e^{-is\mathcal{D}}\alpha\cdot Au(s)ds \right\|_{L^2}.
\end{equation}
Now we use the dual form of the smoothing estimate (\ref{smooth}), i.e.
\begin{equation}\label{dual}
\displaystyle\left\|\int e^{-is\mathcal{D}}\alpha\cdot Au(s)ds \right\|_{L^2}\leq
\sum_{j\in\mathbb{Z}}2^\frac{j}{2}\||A|\cdot|u|\|_{L^2_tL^2(|x|\cong2^j)},
\end{equation}
where we have used the dual of the Morrey-Campanato norm 
as in \cite{PerthameVega99-a}.
Hence by H\"older inequality, hypothesis (\ref{hpA}) 
and estimate (\ref{smooth}) we have
\begin{equation}\label{fin}
\displaystyle\sum_{j\in\mathbb{Z}}2^\frac{j}{2}\||A|\cdot|u|\|_{L^2_tL^2(|x|\cong2^j)}\leq
\sum_{j\in\mathbb{Z}}2^j\sup_{|x|\cong 2^j}|A|\cdot\sup_{j\in\mathbb{Z}}\|u\|_{L^2_tL^2(|x|\cong2^j)}\lesssim\|f\|_{L^2}
\end{equation}
which proves (\ref{pstrich1}). 
The proof in the massive case is exactly the same.

\begin{remark}
The endpoint estimates can also be recovered, both in the massless and massive case, adapting the proof of Lemma 13 in \cite{IonescuKenig05-a}, but we will not go into details of this aspect.
\end{remark}

\section{Magnetic Hardy inequality}

This section is devoted to the proof of a version of Hardy's
inqeuality adapted to the perturbed Dirac operator
\begin{equation*}
  \mathcal{H}=\mathcal{D}_{A}+m \beta,\qquad
  \mathcal{D}_{A}=i^{-1}\alpha \cdot \nabla_{A}\equiv
       i^{-1}\alpha \cdot (\nabla-iA).
\end{equation*}
The proof is simple but we include it for the sake of completeness.

\begin{proposition}\label{Hardy}
Let $B=DA-DA^{t}=B_1+B_2$ and assume that
\begin{equation}\label{hardhp}
\||x|^2B_1\|_{L^\infty(\mathbb{R}^n)}<\infty,\quad \|B_2\|_{L^\infty(\mathbb{R}^n)}<\infty.
\end{equation}
Then for every $f:\mathbb{R}^n\rightarrow\mathbb{C}^M$ such that $\mathcal{H}f\in L^2$ and any $\varepsilon<1$ the following 
inequality holds when $m\neq0$:
\begin{equation}\label{hardyineq}
\begin{split}
  m^2\int_{\mathbb{R}^n}|f|^2
  +\left(
      (1-\varepsilon)
      \frac{(n-2)^2}{4}
      -\frac12\||x|^2B_1\|_{L^\infty}
    \right) &
  \int_{\mathbb{R}^n}
  \frac{|f|^2}{|x|^2}
  +\varepsilon\int_{\mathbb{R}^n}|\nabla_Af|^2\le
    \\
  \le 
  &
  \left(1+\frac{\|B_2\|_{L^\infty}}{2m^2}\right)
  \int_{\mathbb{R}^n}|\mathcal{H}f|^2.
\end{split}
\end{equation}
When $m=0$, the inequality is also true provided we choose
$B_{1}=B$, $B_{2}=0$ and we interpret the right hand side of
\eqref{hardyineq} simply as $\int|\mathcal{H}f|^{2}$.
\end{proposition}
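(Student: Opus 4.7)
The approach is to compute $\|\mathcal{H}f\|_{L^2}^2 = (\mathcal{H}^2 f,f)_{L^2}$ directly from the identity $\mathcal{H}^2 = (m^2-\Delta_A)\mathbb{I}_M + iS\cdot B$ already established in \eqref{Hsquare3}, which immediately yields the basic identity
$$\|\mathcal{H}f\|_{L^2}^2 = m^2\|f\|_{L^2}^2 + \|\nabla_A f\|_{L^2}^2 + i(S\cdot B f, f)_{L^2}.$$
The proof of \eqref{hardyineq} then reduces to controlling the spin term $i(S\cdot Bf,f)$ and splitting $\|\nabla_A f\|^2$ to match the claimed form.

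First, I would bound the spin term pointwise. Since for $j\neq k$ the matrix $S_{jk}=\tfrac12\alpha_j\alpha_k$ is a product of two hermitian unitary matrices (so its operator norm equals $\tfrac12$), and since $iS\cdot B$ is hermitian, the triangle inequality yields
$$\|iS\cdot B(x)\|_{M\times M} \leq \tfrac12 [B(x)]_1.$$
Using the decomposition $B=B_1+B_2$ and bounding the $B_1$ contribution via the Hardy weight $|x|^{-2}$, I obtain
$$|i(S\cdot B f, f)_{L^2}| \leq \tfrac12\||x|^2 B_1\|_{L^\infty}\int_{\mathbb{R}^n}\frac{|f|^2}{|x|^2} + \tfrac12\|B_2\|_{L^\infty}\|f\|_{L^2}^2.$$

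Second, to absorb the leftover $\tfrac12\|B_2\|_{L^\infty}\|f\|_{L^2}^2$ into a multiple of $\|\mathcal{H}f\|^2$, I would exploit the anticommutation $\mathcal{D}_A\beta+\beta\mathcal{D}_A=0$, which follows from $\alpha_j\beta=-\beta\alpha_j$ for $j\geq 1$. Together with the self-adjointness of $\mathcal{D}_A$ and $\beta$, it implies $\Re(\mathcal{D}_A f,\beta f)_{L^2}=0$, and hence
$$\|\mathcal{H}f\|_{L^2}^2 = \|\mathcal{D}_A f\|_{L^2}^2 + m^2\|f\|_{L^2}^2 \geq m^2\|f\|_{L^2}^2,$$
so $\tfrac12\|B_2\|_{L^\infty}\|f\|_{L^2}^2\leq \frac{\|B_2\|_{L^\infty}}{2m^2}\|\mathcal{H}f\|_{L^2}^2$, producing the coefficient $1+\|B_2\|_{L^\infty}/(2m^2)$ on the right of \eqref{hardyineq}. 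In the massless case $m=0$ the hypothesis $B_2=0$ removes the need for this step.

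Third, to generate the $\varepsilon\|\nabla_A f\|^2$ and $(1-\varepsilon)(n-2)^2/4$ pieces on the left, I would split $\|\nabla_A f\|^2=\varepsilon\|\nabla_A f\|^2+(1-\varepsilon)\|\nabla_A f\|^2$ and apply to the second summand the standard magnetic Hardy inequality
$$\frac{(n-2)^2}{4}\int_{\mathbb{R}^n}\frac{|f|^2}{|x|^2}\leq \|\nabla_A f\|_{L^2}^2,$$
a consequence of Kato's diamagnetic bound $|\nabla|f||\leq|\nabla_A f|$ combined with the classical Hardy inequality; the condition $\varepsilon<1$ is exactly what is required. Putting the three steps together and rearranging yields \eqref{hardyineq}. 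The main obstacle is more bookkeeping than substance: one must carefully track signs and recognize that the crude lower bound $\|\mathcal{H}f\|^2\geq m^2\|f\|^2$---itself a hidden Hardy-type control by the mass alone---is precisely what dominates the portion of $i(S\cdot Bf,f)$ that cannot be absorbed by the Hardy weight.
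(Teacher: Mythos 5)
Your proposal is correct and follows the same route as the paper: expand $\|\mathcal{H}f\|^{2}$ via $\mathcal{H}^{2}=(m^{2}-\Delta_{A})\mathbb{I}_{M}+iS\cdot B$, bound the spin term pointwise by $\tfrac12[B]_{1}$ and split it through $B=B_{1}+B_{2}$, absorb the $B_{2}$ contribution using $\|\mathcal{H}f\|^{2}=m^{2}\|f\|^{2}+\|\mathcal{D}_{A}f\|^{2}\ge m^{2}\|f\|^{2}$ (from the anticommutation of $\beta$ with $\mathcal{D}_{A}$), and split $\|\nabla_{A}f\|^{2}$ into an $\varepsilon$ piece and a $(1-\varepsilon)$ piece to which the magnetic Hardy inequality is applied. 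The only cosmetic difference is that you rederive the magnetic Hardy inequality from the diamagnetic bound while the paper simply cites it; your rendition of the final splitting step is in fact cleaner than the paper's slightly garbled line.
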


\begin{proof}
Denote with $(\cdot,\cdot)$ the inner product in $L^2(\mathbb{R}^n,\mathbb{C}^M)$ and with $\|\cdot\|$ the 
associated norm. Recalling \eqref{Hsquare3}, we can write
\begin{equation*}
  \|\mathcal{H}f\|^{2}=
  m^{2}\|f\|^{2}+
  \|\nabla_{A}f\|^{2}+
  i(S \cdot B f,f)
\end{equation*}
where the matrix $S \cdot B=[S_{jk}]\cdot[B^{jk}]$
is skew symmetric since
\begin{equation*}
  S_{jk}=\frac12 \alpha_{j}\alpha_{k},\qquad
  B^{jk}=\partial_{j}A^{k}-\partial_{k}A^{j}.
\end{equation*}
The selfadjoint matrices $\alpha_{j}$ have norm less than 1
(recall $\alpha^{2}_{j}=\mathbb{I}$), so that
\begin{equation*}
  |(S \cdot Bf,f)|\le 
  \frac12([B]_{1}f,f)
\end{equation*}
where we denote by $[B]_{1}$ the $\ell^{1}$ matrix norm
\begin{equation*}
  [B(x)]_{1}=\sum_{j,k}|B^{jk}(x)|.
\end{equation*}
Now recalling assumption \eqref{hardhp} we can write
\begin{equation*}
  |(S \cdot Bf,f)|\le 
  \frac12 \||x|^{2}B_{1}\|_{L^{\infty}}\left\|\frac{f}{|x|}\right\|^{2}
  +
  \frac12\|B_{2}\|_{L^{\infty}}\|f\|^{2}
\end{equation*}
and in conclusion
\begin{equation*}
  \|\mathcal{H}f\|^{2}\ge 
  m^{2}\|f\|^{2}+
  \|\nabla_{A}f\|^{2}-
  \frac12 \||x|^{2}B_{1}\|_{L^{\infty}}\left\|\frac{f}{|x|}\right\|^{2}
  -
  \frac12\|B_{2}\|_{L^{\infty}}\|f\|^{2}.
\end{equation*}
We now recall the magnetic Hardy inequality proved in 
\cite{FanelliVega09-a}:

\begin{equation}\label{fanveahar}
\displaystyle\frac{(n-2)^2}{4}\int_{\mathbb{R}^n}\frac{|f|^2}{|x|^2}\leq \int_{\mathbb{R}^n}|\nabla_Af|^2.
\end{equation}
Observing now that
$$
\|\mathcal{H}f\|^2=(\mathcal{H}^2f,f)=m^2\|f\|^2+\|\mathcal{D}_Af\|^2
$$
and that
$$
\displaystyle (1-\varepsilon)\left\|\frac{f}{|x|}\right\|^2
+\varepsilon\|\nabla_Af\|^2\leq\|\nabla f\|^2,
$$
the proof is complete.
\end{proof}

\end{document}